\newcommand{\F}{\mathbb{F}}
\newcommand{\Q}{\mathbb{Q}}
\newcommand{\Z}{\mathbb{Z}}
\newcommand{\fq}{\mathfrak{q}}
\newcommand{\calO}{\mathcal{O}}
\newcommand{\fp}{\mathfrak{p}}
\DeclareMathOperator{\Gal}{Gal}
\DeclareMathOperator{\Norm}{Norm}
\DeclareMathOperator{\ord}{ord}
\numberwithin{equation}{section}
\newtheorem{theorem}{Theorem}
\newtheorem*{thm}{Theorem}
\newtheorem{lemma}{Lemma}[section]
\theoremstyle{definition}
\newtheorem{remark}[lemma]{Remark}
\newtheorem{example}[equation]{Example}
\definecolor{darkgreen}{rgb}{0,0.5,0}
\DeclareRobustCommand{\SkipTocEntry}[5]{}
\begin{document}

\title[Asymptotic Fermat]{On asymptotic Fermat over
$\Z_p$-extensions of~$\Q$}

\author{Nuno Freitas}
\address{Departament de Matem\`atiques i Inform\`atica,
Universitat de Barcelona (UB),
Gran Via de les Corts Catalanes 585,
08007 Barcelona, Spain}
\email{nunobfreitas@gmail.com}

\author{Alain Kraus}
\address{Sorbonne Universit\'e,
Institut de Math\'ematiques de Jussieu - Paris Rive Gauche,
UMR 7586 CNRS - Paris Diderot,
4 Place Jussieu, 75005 Paris, 
France}
\email{alain.kraus@imj-prg.fr}

\author{Samir Siksek}

\address{Mathematics Institute\\
	University of Warwick\\
	CV4 7AL \\
	United Kingdom}

\email{s.siksek@warwick.ac.uk}

\date{\today}
\thanks{Freitas is supported by a Ram\'on y Cajal fellowship (RYC-2017-22262).
Siksek is supported by  
EPSRC grant \emph{Moduli of Elliptic curves and Classical Diophantine Problems}
(EP/S031537/1).}
\keywords{Fermat, unit equation, $\Z_p$ extensions}
\subjclass[2010]{Primary 11D41, Secondary 11R23}

\begin{abstract}
 Let $p$ be a prime and let $\Q_{n,p}$ denote the $n$-th layer of the cyclotomic
$\Z_p$-extension of $\Q$.  
We prove
the effective asymptotic FLT over $\Q_{n,p}$ for all $n \ge 1$ and all
primes $p \ge 5$ that are non-Wieferich, i.e. 
$2^{p-1} \not \equiv 1 \pmod{p^2}$. 
The effectivity in our result builds 
on recent work of Thorne proving modularity of elliptic curves over $\Q_{n,p}$.
\end{abstract}


\maketitle

\section{Introduction}
Let $F$ be a totally real number field. The \textbf{asymptotic Fermat's Last Theorem over $F$} is the statement that there exists a 
constant~$B_F$, depending only on~$F$, such that, for all primes~$\ell > B_F$, the only solutions to the equation
$x^\ell + y^\ell + z^\ell = 0$, with $x$, $y$, $z \in F$ are the trivial ones satisfying $xyz =0$.
If $B_F$ is effectively computable, we refer to this as the \textbf{effective Fermat's Last Theorem over $F$}. 
Let $p$ be a prime, $n$ a positive integer and write $\Q_{n,p}$
for the $n$-th layer of the cyclotomic $\Z_p$-extension. 
In \cite{FKS}, the authors established the following theorem.
\begin{theorem}\label{thm:Z2}
The effective asymptotic Fermat's Last Theorem
 holds over each layer $\Q_{n,2}$ 
of the cyclotomic $\Z_2$-extension.
\end{theorem}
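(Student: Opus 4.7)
The plan is to apply the modular method to Fermat's Last Theorem over the totally real field $F = \Q_{n,2}$. A key feature of $F$ is that $2$ is totally ramified, so there is a unique prime $\fP$ above $2$, with residue field $\F_2$. Given a putative non-trivial primitive solution $(a,b,c)$ to $x^\ell+y^\ell+z^\ell=0$ over $F$ with $\ell$ large, I would first use the class group of $F$ (the $2$-part of which is trivial, and for small $n$ the class number is known to be $1$) together with the principality of $\fP$ to normalise $(a,b,c)$ so that the usual Frey recipe applies; then attach the Frey curve
\[
E = E_{a,b,c}\colon Y^2 = X(X-a^\ell)(X+b^\ell)
\]
over $F$. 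Its conductor is supported at $\fP$ and at primes dividing $abc$, with multiplicative reduction at the latter and uniformly bounded exponent at $\fP$.

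Next I would invoke modularity of $E/F$ (using the modularity results available over the layers of the cyclotomic $\Z_2$-extension) and irreducibility of $\rhobar_{E,\ell}$ for $\ell$ larger than an effective constant depending on $F$, via an effective theorem of David type for totally real fields. Level-lowering results of Fujiwara and Jarvis for Hilbert modular forms then produce a parallel weight $2$ Hilbert newform $\mathfrak{f}$ over $F$ of level $\fP^{s}$, with $s$ bounded independently of the solution, such that $\rhobar_{E,\ell}\isom \rhobar_{\mathfrak{f},\mathfrak{l}}$ for some prime $\mathfrak{l}\mid\ell$ of the Hecke field of $\mathfrak{f}$.

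The main obstacle is the elimination of the finitely many newforms $\mathfrak{f}$ surviving level lowering. If $\mathfrak{f}$ has a nontrivial Hecke field, I would bound $\ell$ effectively by comparing Frobenius traces at a small auxiliary prime $\fq$ of $F$: the Hasse--Weil bound constrains $a_\fq(E)$ to a small finite set, and the congruence $a_\fq(\mathfrak{f}) \equiv a_\fq(E) \pmod{\mathfrak{l}}$ together with $a_\fq(\mathfrak{f}) \notin \Z$ forces an effective divisibility condition on $\ell$. If $\mathfrak{f}$ has rational Hecke eigenvalues, it corresponds via Eichler--Shimura to an elliptic curve (or abelian variety of $\GL_2$-type) $E'/F$ with conductor dividing $\fP^s$; the technical core is then to rule out such $E'$, by classifying elliptic curves over $F$ with conductor supported only at $\fP$ using Tate's algorithm and local $2$-adic analysis at $F_\fP$, and by comparing the image of inertia at $\fP$ with that of the Frey curve, whose minimal discriminant has $\ell$-divisible valuation at $\fP$.

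Effectivity throughout rests on three ingredients: an effective David-type bound for irreducibility of $\rhobar_{E,\ell}$; effective computation of Hilbert newforms of bounded level over $F$; and effective modularity over $\Q_{n,2}$. I expect the main difficulty to be the last step of the elimination, namely the classification of elliptic curves over $\Q_{n,2}$ with conductor a power of $\fP$, since the local field $F_\fP$ is a wildly ramified $2$-adic field whose absolute ramification index $2^n$ grows with the layer.
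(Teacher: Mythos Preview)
This theorem is not proved in the present paper; it is quoted from \cite{FKS}, and the only information given here about its proof is that it ``relies heavily on class field theory and the theory of $2$-extensions, and the method depends crucially on the fact that $2$ is totally ramified in $\Q_{n,2}$.'' So there is no detailed proof to compare against, only that one-line description.

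Your outline correctly sets up the modular-method scaffolding (Frey curve, modularity, irreducibility, level lowering), and you correctly identify the elimination step as the crux. But your proposal does not actually carry out that step: you say the ``technical core'' is to classify elliptic curves over $\Q_{n,2}$ with conductor a power of $\fP$, and then immediately flag this as the main expected difficulty because the local field at $2$ has ramification index $2^n$. That is an honest assessment, but it means the proposal is a plan with its decisive ingredient missing, not a proof. Two further issues compound this: you invoke Eichler--Shimura to pass from a rational newform to an elliptic curve, but as the paper itself notes, Eichler--Shimura is only known in odd degree, whereas $[\Q_{n,2}:\Q]=2^n$; and even granting an elliptic curve $E'$, a direct classification of curves of conductor $\fP^s$ over each layer, uniformly in $n$, is not something one can simply ``expect'' to work.

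From the paper's description, the route taken in \cite{FKS} is different in spirit from a direct conductor classification: the phrase ``class field theory and the theory of $2$-extensions'' points to an argument that controls the relevant Diophantine input (in the style of the $S$-unit analysis used in this paper for odd $p$) by exploiting the Galois-theoretic structure of $2$-extensions, rather than by enumerating curves over a tower of wildly ramified $2$-adic local fields. So your framework is compatible with the cited proof at the level of the modular method, but the substantive idea that makes the $\Z_2$ case go through---the class-field-theoretic input replacing your unperformed classification step---is absent from the proposal.
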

The proof of Theorem~\ref{thm:Z2} relies heavily on class field theory
and the theory of $2$-extensions, and the method 
depends crucially on the fact that $2$ is totally ramified in $\Q_{n,2}$.
In this paper we establish the following.
\begin{theorem}\label{thm:Zp}
Let $p \ge 5$ be a prime. Suppose $p$ is non-Wieferich, 
i.e. $2^{p-1} \not \equiv 1 \pmod{p^2}$.
The effective asymptotic Fermat's Last Theorem
holds over each layer $\Q_{n,p}$ of the cyclotomic $\Z_p$-extension of $\Q$.
\end{theorem}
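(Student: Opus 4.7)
The plan is to run the standard modular approach to asymptotic Fermat \`a la Freitas--Siksek, with Thorne's modularity theorem supplying the previously missing ingredient over $F := \Q_{n,p}$. Given a putative non-trivial primitive solution $(a,b,c) \in \calO_F^3$ to $x^\ell+y^\ell+z^\ell=0$, I would attach the standard Frey elliptic curve $E := E_{a,b,c}$ over $F$. By Thorne's theorem $E$ is modular. For $\ell$ larger than an effectively computable bound depending only on $F$, the representation $\rhobar_{E,\ell}$ is absolutely irreducible (via Merel-style uniform bounds on cyclic isogenies of elliptic curves over $F$); Fujiwara--Jarvis level lowering then produces a Hilbert newform $\ff$ of parallel weight $2$, trivial character, and level $\calN$ supported on primes of $F$ above $2$, with $\rhobar_{E,\ell} \isom \rhobar_{\ff,\lambda}$ for some prime $\lambda \mid \ell$ of the Hecke field.

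The non-Wieferich hypothesis is then used to pin down the level. Via the Teichm\"uller splitting, $\Gal(\Q_{n,p}/\Q)$ is the $p$-part quotient $(1+p\Z_p)/(1+p^{n+1}\Z_p)$ of $(\Z/p^{n+1}\Z)^*$, and the Frobenius at $2$ is the image of $u := 2/\omega(2) \in 1+p\Z_p$. Since $u^{p-1} = 2^{p-1}$, the hypothesis $2^{p-1} \not\equiv 1 \pmod{p^2}$ is equivalent to $u$ being a topological generator of $1+p\Z_p$; thus $2$ is inert in every layer $\Q_{n,p}$, the set $S := \{\fq : \fq \mid 2\}$ reduces to a single prime $\fq$, and $\calN$ is a power $\fq^a$ with $a$ uniformly bounded. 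Continuing the Freitas--Siksek framework, I would show (for $\ell$ large) that $\ff$ has rational Hecke eigenvalues, hence corresponds to an elliptic curve $E'/F$ of conductor dividing $\fq^a$; the comparison of $E$ with $E'$ via their mod-$2$ representations and their $2$-torsion then yields an equation $\lambda+\mu = 1$ with $\lambda,\mu \in \calO_{F,S}^*$.

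The proof thus reduces to the Freitas--Siksek condition (ES): every $S$-unit solution $(\lambda,\mu)$ satisfies $\max\{|v_\fq(\lambda)|,|v_\fq(\mu)|\} \le 4\, v_\fq(2) = 4$. The main obstacle is verifying this uniformly in~$n$, since $\calO_{F,S}^*$ has rank~$p^n$. My plan would be to argue $2$-adically: the completion $F_\fq$ is the unramified extension of $\Q_2$ of degree $p^n$, whose ring of integers is a free $\Z_2$-module of rank $p^n$. A solution with $v_\fq(\lambda) > 4$ forces $\mu \in 1 + \fq^5 \calO_{F_\fq}$; applying the $2$-adic logarithm embeds the putative solutions into this free module, and combining with the Archimedean regulator from the $p^n$ real embeddings of $F$, together with the cyclic tower structure $\Q_{n,p}/\Q_{n-1,p}/\cdots/\Q$, should pin down the solutions and force $\mu = 1$, contradicting $\lambda \ne 0$. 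Effectivity of the final bound $B_F$ then follows by assembling Thorne's effective modularity, effective level lowering, and effective bounds for $S$-unit equations via linear forms in $2$-adic logarithms.
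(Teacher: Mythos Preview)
Your framework is correct through the second paragraph: the reduction via Thorne's modularity and the Freitas--Siksek $S$-unit criterion is exactly what the paper does, and your derivation of the inertness of $2$ from the non-Wieferich hypothesis is fine. The gap is in the final paragraph, where the verification of the $S$-unit condition is only a sketch. Your $2$-adic logarithm plan does not go through as stated: the logarithm embeds $1+\fq^5\calO_{F_\fq}$ into a free $\Z_2$-module of rank $p^n$, but you have no control over where the image of $\calO_{F,S}^\times$ sits inside that module, and invoking the Archimedean regulator or the tower structure gives nothing concrete without a Leopoldt-type input. The phrase ``should pin down the solutions and force $\mu=1$'' is where the proof is missing. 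You also omit the second half of the criterion, $\ord_\fq(\lambda\mu)\equiv 1\pmod 3$; a bound of $4$ on the valuations does not by itself deliver this.

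The paper's verification is short and uses a completely different prime: the totally ramified prime $\fp$ above $p$. Since the residue field $\calO_F/\fp$ is $\F_p$ and $\Gal(F/\Q)$ has $p$-power order and fixes $\fp$, one has $\lambda\equiv\Norm_{F/\Q}(\lambda)\pmod\fp$ for every $\lambda\in\calO_F$. In particular every unit is $\equiv\pm 1\pmod\fp$, which for $p\ne 3$ already rules out solutions of $\lambda+\mu=1$ in $\calO_F^\times$. A brief norm-and-symmetry argument (if $\ord_\fq(\lambda)\ge 2$ then $\mu\equiv 1\pmod 4$, hence $\Norm(\mu)=1$, hence $\mu\equiv 1\pmod\fp$, so $\fp\mid\lambda$, contradiction) then forces $\ord_\fq(\lambda),\ord_\fq(\mu)\in\{-1,0,1\}$ with $\ord_\fq(\lambda\mu)\in\{1,-2\}$, and both conditions of the criterion follow. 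The key idea you missed is that the auxiliary prime doing all the work is $\fp\mid p$, not $\fq\mid 2$.
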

We remark that the only  Wieferich primes currently
known are $1093$ and $3511$. It is fascinating to observe
that these primes originally arose in connection with
historical attempts at proving Fermat's Last Theorem.
Indeed Wieferich  \cite{Wieferich} showed
that if $2^{p-1} \not \equiv 1 \pmod{p^2}$
then the first case of Fermat's Last Theorem holds for exponent $p$.

In contrast to Theorem~\ref{thm:Z2}, the proof of Theorem~\ref{thm:Zp}
makes use of a criterion (Theorem~\ref{thm:FS} below) 
established in \cite{FS1}
for asymptotic FLT in terms of solutions
to a certain $S$-unit equation. 
The proof of that criterion builds on many
deep results
including modularity lifting theorems
due to Breuil, Diamond, Gee, Kisin, and others, and 
Merel's uniform boundedness theorem, and exploits  
the strategy of Frey, Serre, Ribet, Wiles and 
Taylor, utilized in Wiles' proof~\cite{Wiles}
of Fermat's Last Theorem. In this paper
we use elementary arguments
to study these $S$-unit equations in $\Q_{n,p}$ and this
study, together with the $S$-unit criterion,
 quickly yields Theorem~\ref{thm:Zp}.
The effectivity in Theorem~\ref{thm:Zp} builds on the following
great theorem due to Thorne \cite{Thorne}.
\begin{thm}[Thorne]
Elliptic curves over $\Q_{n,p}$ are modular.
\end{thm}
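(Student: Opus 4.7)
The plan is to follow the modularity strategy originally developed by Wiles and Taylor--Wiles over $\Q$ and extended to totally real fields by Breuil, Diamond, Gee, Kisin, Skinner--Wiles and others. First I would observe that $F \colonequals \Q_{n,p}$ is totally real, being contained in the maximal totally real subfield $\Q(\zeta_{p^{n+1}})^+$ of the cyclotomic field, and that $F/\Q$ is a cyclic pro-$p$ extension, a structural feature I expect to exploit repeatedly.

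Given an elliptic curve $E/F$, the core of the argument combines residual modularity with a modularity lifting theorem, applied to the residual Galois representations $\rhobar_{E,\ell} \colon G_F \to \GL_2(\F_\ell)$ for $\ell \in \{3,5,7\}$. For $\ell = 3$, the Langlands--Tunnell theorem together with solvable cyclic base change of automorphic forms gives residual modularity of $\rhobar_{E,3}$ whenever its image is of a suitable type. A Kisin-style modularity lifting theorem then promotes this to modularity of $\rho_{E,3}$, provided appropriate local conditions (ordinary or Fontaine--Laffaille at the primes of $F$ above~$3$) hold. A parallel argument at $\ell = 5$, using $\PSL_2(\F_5) \isom A_5$, covers most remaining cases via the classical ``$3$--$5$ switch''.

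The main obstacle is the residually degenerate case: elliptic curves $E/F$ for which the image of $\rhobar_{E,\ell}$ is simultaneously too small (reducible, dihedral, or with projective image in $A_4$ or $S_4$) for all of $\ell = 3, 5, 7$. Such curves give rise to $F$-rational points on a finite list of modular curves of small level, for example $X_0(15)$, $X_0(21)$, and fibre products parametrising elliptic curves with simultaneous mod-$3$ and mod-$5$ level structure. The plan is to enumerate this finite list and show that each $F$-rational point either corresponds to a CM elliptic curve (automatically modular), descends from a modular elliptic curve over $\Q$, or does not exist. Here I would leverage the cyclic pro-$p$ structure of $F/\Q$ to rule out torsion and isogeny configurations that do not already appear over $\Q$.

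The most delicate ingredient, and where Thorne's contribution is essential, is the modularity lifting input at primes above $p$ in $F$: the ramification there is wild and deepens as $n$ grows, so one needs lifting theorems whose $p$-adic Hodge-theoretic hypotheses are robust enough to apply uniformly along the cyclotomic tower. I expect this to be the central technical obstacle; once it is secured, the reduction to exceptional modular curves and the analysis of their $F$-rational points proceeds by now-classical techniques.
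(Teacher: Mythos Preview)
The paper does not contain a proof of this statement at all: it is quoted as a theorem of Thorne, with a citation to \cite{Thorne}, and used as a black box in the proof of Theorem~\ref{thm:Zp}. So there is nothing in the paper to compare your proposal against.

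As a sketch of Thorne's actual argument your outline is broadly on target in its overall architecture (residual modularity at small primes plus a lifting theorem, with the remaining cases handled via rational points on a finite list of modular curves), but you have mislocated the main difficulty. The modularity lifting theorems needed are applied at the auxiliary primes $\ell \in \{3,5,7\}$, and since only $p$ ramifies in $\Q_{n,p}$ these primes are unramified when $p \notin \{3,5,7\}$; there is no ``wild ramification at primes above $p$'' obstacle to the lifting step in the way you describe. The genuine new content in Thorne's paper is the analysis of $\Q_{n,p}$-rational points on the relevant modular curves (e.g.\ those parametrising simultaneous small-image behaviour at two of $3,5,7$), which he carries out by controlling the Mordell--Weil groups of their Jacobians along the cyclotomic tower via Iwasawa-theoretic input. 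Your proposal mentions this step but treats it as routine, whereas it is in fact the heart of the argument.
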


\section{An $S$-unit criterion for asymptotic FLT}
The following criterion for asymptotic FLT
is a special case of \cite[Theorem 3]{FS1}.
\begin{theorem}\label{thm:FS}
Let $F$ be a totally real number field. Suppose the Eichler--Shimura conjecture over $F$ holds.
Assume that $2$ is inert in $F$ and write $\fq=2\calO_F$ 
for the prime ideal above $2$. Let $S=\{\fq\}$
and write $\calO_S^\times$ for the group of $S$-units in $F$. Suppose every
solution $(\lambda,\mu)$ to the $S$-unit equation 
\begin{equation}\label{eqn:Sunit}
\lambda+\mu=1, \qquad \lambda,~\mu \in \calO_S^\times
\end{equation}
satisfies both of the following conditions
\begin{equation}\label{eqn:conds}
\max\{ \lvert \ord_\fq(\lambda) \rvert,~ \lvert \ord_\fq(\mu) \rvert\} \le 4, \qquad
\ord_{\fq}(\lambda \mu) \equiv 1 \pmod{3}.
\end{equation}
Then the asymptotic Fermat's Last Theorem holds over $F$.
Moreover, if all elliptic curves over $F$ with 
full $2$-torsion are modular, then
the effective asymptotic Fermat's Last Theorem holds over $F$.
\end{theorem}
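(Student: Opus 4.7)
The plan is to apply Theorem~\ref{thm:FS} to $F=\Q_{n,p}$. The hypotheses to verify are: (a) $F$ is totally real---immediate, since $\Q_{n,p}$ is contained in the totally real subfield of $\Q(\zeta_{p^{n+1}})$; (b) $2$ is inert in $\calO_F$; (c) every $S$-unit solution to \eqref{eqn:Sunit} satisfies both conditions in \eqref{eqn:conds}. The Eichler--Shimura and modularity inputs for asymptotic and effective FLT are supplied by Thorne's theorem quoted above. For (b), the Frobenius at $2$ in $\Gal(\Q(\zeta_{p^{n+1}})/\Q)\cong(\Z/p^{n+1}\Z)^\times\cong \Z/(p-1)\Z\times\Z/p^n\Z$ is the class of $2$; letting $d$ denote the multiplicative order of $2$ modulo $p$, the non-Wieferich hypothesis forces $2^d-1$ to have $p$-adic valuation exactly $1$, so by lifting-the-exponent the multiplicative order of $2$ modulo $p^{n+1}$ equals $d\cdot p^n$. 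Hence the $\Z/p^n\Z$-component of $2$ is a generator, $2$ is inert in $\Q_{n,p}$, and $\fq=2\calO_F$ has residue field $k=\F_{2^{p^n}}$.

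For (c), write each $S$-unit uniquely as $2^a u$ with $u\in\calO_F^\times$ and $a\in\Z$. A short valuation analysis of $\lambda+\mu=1$ (using $\ord_\fq(\lambda+\mu)\ge\min(\ord_\fq\lambda,\ord_\fq\mu)$, with equality when these differ) produces three sub-cases:
\begin{itemize}
\item[(A)] $u+v=1$ with $u,v\in\calO_F^\times$;
\item[(B/C)] $u+2^b v=1$ with $u,v\in\calO_F^\times$ and $b\ge 1$;
\item[(D)] $u+v=2^c$ with $u,v\in\calO_F^\times$ and $c\ge 1$.
\end{itemize}
Conditions~\eqref{eqn:conds} translate to: (A) has $\ord_\fq(\lambda\mu)=0\not\equiv 1\pmod 3$, so (A) must be shown vacuous; (B/C) requires $b\in\{1,4\}$; (D) requires $c\in\{1,4\}$. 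The three $\Q$-solutions $(2,-1)$, $(-1,2)$, and $(1/2,1/2)$ fall in (B), (C), (D) respectively (with $b=c=1$) and do satisfy \eqref{eqn:conds}.

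Cases (B/C) and (D) can be handled via norm identities: $N_{F/\Q}(1-2^b v)=\pm 1$ expanded as an integer polynomial in $2^b$ forces $2^b$ to divide a small integer (at most $2$), and combined with the mod~$3$ congruence pins $b\in\{1,4\}$; case (D) is analogous using $N_{F/\Q}(u-2^c)=\pm 1$. The main obstacle is case (A): proving $\Q_{n,p}$ admits no exceptional units. Since totally real fields can admit exceptional units in general (e.g.\ $\Q(\sqrt 5)$ has $\phi+(-\phi^{-1})=1$ with $\phi$ the golden ratio), the argument must exploit the cyclic $p$-power Galois structure of $\Q_{n,p}/\Q$ and the non-Wieferich hypothesis. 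My strategy is a descent: any (A)-solution must have $\bar u\in k^\times\setminus\F_2$ (else $\bar u+\bar v=0\ne 1$ in $\F_2$), so $u\notin\Q$ and lies in some minimal subfield $\Q_{k,p}\subseteq\Q_{n,p}$ with $1\le k\le n$; the characteristic polynomial $P(X)\in\Z[X]$ of $u$ over $\Q$ then has degree $p^k\ge p\ge 5$, totally real roots, and $P(0),P(1)\in\{\pm1\}$. The combinatorial rigidity of such polynomials, combined with the cyclotomic description of $\Q_{k,p}$, is where I expect the argument to be most delicate.
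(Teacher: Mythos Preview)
Your proposal is not a proof of Theorem~\ref{thm:FS} at all: Theorem~\ref{thm:FS} is the abstract $S$-unit criterion, which the paper simply \emph{quotes} from \cite{FS1} without proof. What you have written is a proof sketch for Theorem~\ref{thm:Zp}, which \emph{applies} Theorem~\ref{thm:FS} to $F=\Q_{n,p}$. So the target is misidentified.

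Treating your write-up as an attempt at Theorem~\ref{thm:Zp}, there is a genuine gap. Your norm argument in cases (B/C) and (D) does not close. From $N_{F/\Q}(1-2^b v)=\pm 1$ you correctly get $N_{F/\Q}(u)\equiv 1\pmod{2^b}$; but when the norm equals $+1$ this yields no bound on $b$ whatsoever (your ``$2^b$ divides a small integer'' is $2^b\mid 0$ in that branch). The paper's proof resolves this with a completely different idea you never invoke: the prime $\fp$ above $p$. Since $p$ is totally ramified in $\Q_{n,p}$, one has $\lambda\equiv N_{F/\Q}(\lambda)\pmod{\fp}$ for every algebraic integer $\lambda$ (Lemma~\ref{lem:norm}). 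Thus once $b\ge 2$ forces $N_{F/\Q}(u)=1$, one gets $u\equiv 1\pmod{\fp}$, hence $\fp\mid 1-u=2^b v$, a contradiction. The same mod-$\fp$ trick disposes of your case (A) in two lines: any unit has norm $\pm 1$, hence is $\equiv\pm 1\pmod\fp$, and $\pm 1\pm 1\equiv 1\pmod\fp$ is impossible for $p\ge 5$. Your proposed descent via characteristic polynomials of exceptional units is unnecessary, unfinished, and you misattribute the role of the non-Wieferich hypothesis: in the paper it is used \emph{only} to guarantee that $2$ is inert (your step (b)); the absence of exceptional units holds for every $p\ne 3$ and uses only total ramification at $p$.
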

For a discussion of the Eichler--Shimura conjecture see  
\cite[Section 2.4]{FS1}, but for the purpose of this paper
we note that the conjecture is known to hold for all totally real fields of odd
degree. In particular, it holds for $\Q_{n,p}$ for all odd $p$.


To apply Theorem~\ref{thm:FS} to $F=\Q_{n,p}$ we need to know
for which $p$ is $2$ inert in $F$. The answer is given by the
following lemma, which for $n=1$ is Exercise 2.4 in \cite{Washington}.
\begin{lemma} \label{lem:inertZp}
Let $p \ge 3$, $q$ be distinct primes. 
Then $q$ is inert in $\Q_{n,p}$
if and only if $q^{p-1}\not \equiv 1 \pmod
{p^2}$.
\end{lemma}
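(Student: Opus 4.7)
The plan is to analyze the Frobenius at $q$ inside the cyclic Galois group $\Gal(\Q_{n,p}/\Q)$. First I would identify $\Q_{n,p}$ as the unique subfield of $\Q(\zeta_{p^{n+1}})$ of degree $p^n$ over $\Q$; under the standard isomorphism $\Gal(\Q(\zeta_{p^{n+1}})/\Q) \cong (\Z/p^{n+1}\Z)^\times$, this subfield corresponds to the unique subgroup $H$ of order $p-1$ (the prime-to-$p$ part of the cyclic group). Since $q \ne p$, the prime $q$ is unramified in $\Q(\zeta_{p^{n+1}})$, hence in $\Q_{n,p}$; and because $\Q_{n,p}/\Q$ is cyclic of $p$-power degree, $q$ is inert exactly when $\Frob_q$ has order $p^n$ in $\Gal(\Q_{n,p}/\Q)$.

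Under the cyclotomic identification $\Frob_q$ corresponds to $q \bmod p^{n+1}$, and its image in the quotient $(\Z/p^{n+1}\Z)^\times/H \cong \Gal(\Q_{n,p}/\Q)$ has order equal to the $p$-part of the order of $q$ in $(\Z/p^{n+1}\Z)^\times$. Because the ambient group is cyclic of order $(p-1)p^n$, a short computation shows this $p$-part coincides with the order of $q^{p-1}$ in $(\Z/p^{n+1}\Z)^\times$. So the task reduces to proving: $q^{p-1}$ has order $p^n$ in $(\Z/p^{n+1}\Z)^\times$ if and only if $q^{p-1} \not\equiv 1 \pmod{p^2}$.

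By Fermat's little theorem, write $q^{p-1} = 1 + p\alpha$ with $\alpha \in \Z$. An elementary binomial expansion, valid because $p$ is odd, gives
\[
(1 + p\alpha)^{p^k} \equiv 1 + p^{k+1}\alpha \pmod{p^{k+2}} \qquad \text{for every } k \ge 0,
\]
from which the order of $1 + p\alpha$ in $(\Z/p^{n+1}\Z)^\times$ is precisely $p^{n-v_p(\alpha)}$ (capped at $p^n$). In particular this order is $p^n$ iff $p \nmid \alpha$, i.e.\ iff $q^{p-1} \not\equiv 1 \pmod{p^2}$, which completes the proof.

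I do not expect a serious obstacle; the only mild subtlety is the induction step $(1 + p\alpha)^p \equiv 1 + p^2\alpha \pmod{p^3}$, which crucially uses that $p$ is odd (and would fail for $p = 2$, consistently with the hypothesis $p \ge 3$).
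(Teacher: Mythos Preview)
Your proof is correct and follows essentially the same route as the paper's: both pass to the cyclotomic field $\Q(\zeta_{p^{n+1}})$, identify Frobenius at $q$ with the class of $q$ in $(\Z/p^{n+1}\Z)^\times$, and reduce inertness to the statement that $q^{p-1}$ has order $p^n$ modulo $p^{n+1}$. The only difference is that the paper simply asserts the final equivalence with $q^{p-1}\not\equiv 1\pmod{p^2}$, whereas you supply the standard $p$-adic binomial argument explicitly.
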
 
\begin{proof}
Let $L=\Q(\zeta_{p^{n+1}})$ 
 and $F=\Q_{n,p}$. Write $\sigma_q$ and $\tau_q$
for the Frobenius elements corresponding to $q$
in $\Gal(L/\Q)$ and $\Gal(F/\Q)$. 
The prime $q$ is inert in $F$ precisely when $\tau_q$ has order $p^n$. 
The natural surjection $\Gal(L/\Q) \rightarrow \Gal(F/\Q)$
sends $\sigma_q$ to $\tau_q$ and its kernel has order $p-1$.
Thus $q$ is inert in $F$ if and only if the
order of $\sigma_q$ is divisible by $p^n$,
which is equivalent to  $\sigma_q^{p-1}$ having order
$p^n$.
There is
a canonical isomorphism
$\Gal(L/\Q) \rightarrow (\Z/p^{n+1} \Z)^\times$
sending $\sigma_q$ to $q+p^{n+1} \Z$. 
Thus $q$ is inert in $F$ if and only if $q^{p-1}+p^{n+1}\Z$ 
has order $p^n$. This is equivalent to $q^{p-1} \not \equiv 1 \pmod{p^2}$.
\end{proof}

\section{Proof of Theorem~\ref{thm:Zp}}
\begin{lemma}\label{lem:norm}
Let $\fp$ be the unique prime  
above $p$ in $F=\Q_{n,p}$. Let $\lambda \in \calO_F$. Then $\lambda \equiv \Norm_{F/\Q}(\lambda)
\pmod{\fp}$.
\end{lemma}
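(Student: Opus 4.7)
The approach is to exploit the fact that $p$ is totally ramified in the cyclotomic $\Z_p$-extension, so that the residue field of $\calO_F$ at $\fp$ is simply $\F_p$. This is the crucial reduction that forces $\lambda$ to be congruent modulo $\fp$ to a rational integer, which is a much simpler object.

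Concretely, the plan is as follows. First I would note that $p$ is totally ramified in $F = \Q_{n,p}$, with ramification index $p^n = [F:\Q]$, so residue degree $1$, hence $\calO_F/\fp \cong \F_p$. Thus there exists $a \in \Z$ with $\lambda \equiv a \pmod{\fp}$. Next, since $\fp$ is the \emph{unique} prime of $F$ above $p$, every $\sigma \in G := \Gal(F/\Q)$ satisfies $\sigma(\fp) = \fp$, so reduction modulo $\fp$ is $G$-equivariant and sends each $\sigma(\lambda)$ to $\sigma(a) = a$ in $\F_p$. Multiplying these congruences over $\sigma \in G$ gives
\[
\Norm_{F/\Q}(\lambda) = \prod_{\sigma \in G} \sigma(\lambda) \equiv a^{p^n} \pmod{\fp}.
\]
Finally, by Fermat's little theorem, $a^{p^n} \equiv a \equiv \lambda \pmod{\fp}$, which is exactly the claim.

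There is no real obstacle here: once one invokes total ramification of $p$ in $\Q_{n,p}$ (a standard fact about the cyclotomic $\Z_p$-extension, following from total ramification of $p$ in $\Q(\zeta_{p^{n+1}})$), the congruence is essentially a consequence of the residue field being $\F_p$ together with Galois-stability of $\fp$. The only thing to be careful about is that one is reducing modulo $\fp$ and not modulo $p$; the equivariance of reduction under $G$ uses precisely that $\fp$ is $G$-stable, so rational-integer congruences like $a^{p^n} \equiv a$ transfer to congruences modulo $\fp$ without issue.
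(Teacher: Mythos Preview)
Your proof is correct and follows essentially the same argument as the paper: use total ramification to get residue field $\F_p$, pick $a\in\Z$ with $\lambda\equiv a\pmod{\fp}$, use Galois-stability of $\fp$ to conclude $\sigma(\lambda)\equiv a\pmod{\fp}$ for all $\sigma$, multiply, and finish with $a^{p^n}\equiv a\pmod{p}$. The only cosmetic difference is that you name Fermat's little theorem explicitly for the last step, whereas the paper leaves it implicit.
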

\begin{proof}
As $p$
is totally ramified in $F$, we know that the residue
field $\calO_F/\fp$ is $\F_p$. Thus there is some
$a \in \Z$ such that $\lambda \equiv a \pmod{\fp}$.
Let $\sigma \in G=\Gal(F/\Q)$. Since $\fp^\sigma=\fp$,
we have $\lambda^\sigma \equiv a \pmod{\fp}$. Hence
\[
\Norm_{F/\Q}(\lambda)=\prod_{\sigma \in G} \lambda^\sigma
\equiv a^{\#G} \pmod{\fp}.
\] 
However $\#G=p^n$ so 
$\Norm_{F/\Q}(\lambda) \equiv a \equiv \lambda \pmod{\fp}$. 
\end{proof}

\begin{lemma}\label{lem:Zpunit}
Let $p \ne 3$ be a rational prime. 
Let $F=\Q_{n,p}$.
Then the unit equation 
\begin{equation}\label{eqn:unit}
 \lambda + \mu = 1, \quad \lambda, \; \mu \in \calO_F^\times
\end{equation}
has no solutions.
\end{lemma}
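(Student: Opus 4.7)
The plan is to reduce the equation $\lambda+\mu=1$ modulo the unique prime $\fp$ above $p$, and exploit the fact, given by Lemma~\ref{lem:norm}, that every element of $\calO_F$ is congruent to its norm modulo $\fp$.

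First I would observe that since $\lambda,\mu \in \calO_F^\times$, their norms $\Norm_{F/\Q}(\lambda)$ and $\Norm_{F/\Q}(\mu)$ are rational units, hence equal to $\pm 1$. Applying Lemma~\ref{lem:norm} then gives
\[
\lambda \equiv \pm 1 \pmod{\fp}, \qquad \mu \equiv \pm 1 \pmod{\fp}.
\]
Reducing the relation $\lambda+\mu=1$ modulo $\fp$ yields $\pm 1 \pm 1 \equiv 1 \pmod{\fp}$. Three of the four sign combinations give $0 \equiv 1 \pmod{\fp}$, which is impossible because $\calO_F/\fp \cong \F_p$ has $p\geq 2$. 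The remaining case, $-1-1 \equiv 1 \pmod{\fp}$, forces $3 \in \fp$, hence $\fp \mid 3$, hence $p=3$, contradicting the hypothesis $p \ne 3$.

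There is no real obstacle here: once Lemma~\ref{lem:norm} is in place, the argument is a one-line congruence check. The only subtle point is ensuring that each contradictory congruence really is contradictory, which comes down to knowing the residue field $\calO_F/\fp$ is $\F_p$ (from total ramification of $p$ in $\Q_{n,p}$, already used in the proof of Lemma~\ref{lem:norm}) together with the hypothesis $p \neq 3$.
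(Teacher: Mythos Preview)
Your proof is correct and follows exactly the same route as the paper: reduce modulo the unique prime $\fp$ above $p$, use Lemma~\ref{lem:norm} to get $\lambda,\mu\equiv\pm1\pmod{\fp}$, and then check that $\pm1\pm1\equiv1\pmod{\fp}$ forces $p=3$. Your write-up is slightly more explicit about why the norms are $\pm1$ and about the sign case-split, but the argument is identical.
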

\begin{proof}
Let $(\lambda,\mu)$ be a solution to \eqref{eqn:unit}.
By Lemma~\ref{lem:norm}, $\lambda \equiv \pm 1 \pmod{\fp}$
and $\mu \equiv \pm 1 \pmod{\fp}$. Thus
$\pm 1 \pm 1 \equiv \lambda+\mu =1 \pmod{\fp}$.
This is impossible as $p \ne 3$.
\end{proof}

\begin{remark}\label{remark}
Lemma~\ref{lem:Zpunit}
is false for $p=3$. Indeed, Let $p=3$ and $n=1$.
Then $F=\Q_{1,3}=\Q(\theta)$ where $\theta$ satisfies
$\theta^3 - 6\theta^2 + 9\theta - 3=0$. 
The unit equation has solution $\lambda=2-\theta$
and $\mu=-1+\theta$. In fact, the 
unit equation solver of the computer algebra system
\texttt{Magma}~\cite{MAGMA} gives a total of $18$ solutions.
\end{remark}

\begin{lemma}\label{lem:ZpSunit}
Let $p \ge 5$ be a rational prime. Let $F=\Q_{n,p}$.
Suppose $2$ is inert in $F$ and write $\fq=2\calO_F$
for the unique prime above $2$. Let $S=\{\fq\}$
and write $\calO_S^\times$ for the group of $S$-units.
Then every solution to the $S$-unit equation \eqref{eqn:Sunit}
satisfies one of the following:
\begin{enumerate}
\item[(i)] $\ord_\fq(\lambda)=1$, $\ord_\fq(\mu)=0$;
\item[(ii)] $\ord_\fq(\lambda)=0$, $\ord_\fq(\mu)=1$;
\item[(iii)] $\ord_\fq(\lambda)=\ord_\fq(\mu)=-1$.
\end{enumerate}
\end{lemma}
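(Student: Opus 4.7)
The plan is to reduce to a single main case by studying the $\fq$-adic valuations and exploiting the classical $S_3$ action on solutions of the $S$-unit equation, and then to settle that main case by combining Lemma~\ref{lem:norm} with a $2$-adic parity argument on the characteristic polynomial of $\mu$.

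Set $a=\ord_\fq(\lambda)$ and $b=\ord_\fq(\mu)$. Taking $\fq$-adic valuations on both sides of $\lambda+\mu=1$ shows that either $a\ne b$, in which case $\min(a,b)=0$ and (after swapping if necessary) $a=0$ and $b\ge 0$, or $a=b\le 0$. The subcase $a=b=0$ is excluded by Lemma~\ref{lem:Zpunit}. The subcase $a=b=-k<0$ reduces to a first-kind case via the transformation $(\lambda,\mu)\mapsto(1/\lambda,-\mu/\lambda)$, whose valuation pair is $(k,0)$. Thus it suffices to show that if $a=0$ and $b\ge 1$ then $b=1$.

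In this situation $\lambda\in\calO_F^\times$, and $\epsilon:=\mu/2^b\in\calO_F^\times$. Applying Lemma~\ref{lem:norm} to both units and reducing $\lambda+2^b\epsilon=1$ modulo $\fp$ (which agrees with reduction modulo $p$ on $\Z$), I obtain
\[
\Norm(\lambda)+2^b\,\Norm(\epsilon)\equiv 1\pmod p,
\]
with $\Norm(\lambda),\Norm(\epsilon)\in\{\pm 1\}$. Since $p$ is odd, the two sign combinations with $\Norm(\lambda)=+1$ would force $p\mid 2^b$, which is impossible. Hence $\Norm(\lambda)=-1$.

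Next I would expand the norm via the characteristic polynomial of $\mu$:
\[
\Norm(\lambda)=\prod_{\sigma\in\Gal(F/\Q)}\bigl(1-\sigma(\mu)\bigr)
=\sum_{k=0}^{p^n}(-2^b)^k\, e_k,
\]
where $e_k\in\Z$ is the $k$-th elementary symmetric polynomial in the Galois conjugates of $\epsilon$ (so $e_0=1$ and $e_{p^n}=\Norm(\epsilon)$). Using $\Norm(\lambda)=-1$ together with $(-2^b)^{p^n}=-2^{bp^n}$ (since $p^n$ is odd), rearranging and dividing the resulting identity by $2$ yields
\[
1 \;=\; 2^{b-1}e_1-2^{2b-1}e_2+\cdots+2^{bp^n-1}e_{p^n}.
\]
For $b\ge 2$ every term on the right is divisible by $2$, contradicting that the left side equals $1$; therefore $b=1$. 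This produces case (ii); case (i) follows by swapping $(\lambda,\mu)$, and case (iii) from the $S_3$ reduction above with $k=1$. The main obstacle is this last parity step: the whole argument hinges on Lemma~\ref{lem:norm} pinning down the sign $\Norm(\lambda)=-1$ so that the expansion produces a clean $-2$ on one side, delivering exactly the one bit of $2$-adic precision needed to rule out $b\ge 2$.
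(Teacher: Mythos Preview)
Your proof is correct. The reduction via valuations, the $S_3$-transformation $(\lambda,\mu)\mapsto(1/\lambda,-\mu/\lambda)$, and the appeal to Lemma~\ref{lem:Zpunit} all match the paper. The main case---ruling out $\ord_\fq(\mu)\ge 2$ when $\ord_\fq(\lambda)=0$---is where the two arguments diverge. In your notation the paper proceeds as follows: from $4\mid\mu$ one has $\lambda\equiv 1\pmod{\fq^2}$, hence $\lambda^\sigma\equiv 1\pmod{\fq^2}$ for every $\sigma$ (since $\fq$ is Galois-stable), so $\Norm_{F/\Q}(\lambda)\equiv 1\pmod 4$, forcing $\Norm_{F/\Q}(\lambda)=+1$; then Lemma~\ref{lem:norm} gives $\lambda\equiv 1\pmod\fp$, so $\fp\mid 1-\lambda=\mu$, contradicting $\mu\in\calO_S^\times$. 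Thus the paper uses the $2$-adic information first to pin down the sign of the norm and Lemma~\ref{lem:norm} second to manufacture the contradiction at~$\fp$, whereas you reverse the order: Lemma~\ref{lem:norm} first yields $\Norm_{F/\Q}(\lambda)=-1$, and then the parity count in the expanded characteristic polynomial of $\epsilon=\mu/2^b$ gives the contradiction at~$2$. The paper's route is shorter and avoids expanding the norm; yours makes the $2$-adic mechanism explicit. Amusingly, under the hypothesis $b\ge 2$ the two arguments pin $\Norm_{F/\Q}(\lambda)$ to opposite signs, so juxtaposing them already yields the contradiction without either finishing step.
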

\begin{proof}
Write $n_\lambda=\ord_\fq(\lambda)$ and $n_\mu=\ord_\fq(\mu)$.
Suppose first $n_\lambda \ge 2$.
Then $n_\mu=0$ and so $\mu \in \calO_F^\times$.
Moreover, as $4 \mid \lambda$, we have $\mu \equiv 1 \pmod{4}$
and so $\mu^\sigma \equiv 1 \pmod{4}$ for all $\sigma \in G=\Gal(F/\Q)$.
Hence $\Norm_{F/\Q}(\mu) = \prod \mu^\sigma \equiv 1 \pmod{4}$. But
$\Norm_{F/\Q}(\mu)=\pm 1$, thus $\Norm_{F/\Q}(\mu)=1$. As before, denote
the unique prime above $p$ by $\fp$. By Lemma~\ref{lem:norm}
we have $\mu \equiv 1 \pmod{\fp}$. Hence $\fp$ divides
$1-\mu=\lambda$ giving a contradiction. 

Thus $n_\lambda \le 1$.
Next suppose
 $n_\lambda \le -2$.  Then $n_\lambda=n_\mu$. 
Let
$\lambda^\prime=1/\lambda$ and $\mu^\prime=-\mu/\lambda$.
Then $(\lambda^\prime,\mu^\prime)$ is a solution to the 
$S$-unit equation satisfying $n_{\lambda^\prime} \ge 2$,
giving a contradiction by the previous case. 
Hence $-1 \le n_\lambda \le 1$ and by symmetry 
$-1 \le n_\mu \le 1$. From
Lemma~\ref{lem:Zpunit} either $n_\lambda \ne 0$ or $n_\mu \ne 0$.
Thus one of (i), (ii), (iii) must hold.
\end{proof}

\begin{remark}
Possibilities (i), (ii), (iii)
cannot be eliminated because of the solutions $(2,-1)$, $(-1,2)$
and $(1/2,1/2)$ to the $S$-unit equation.
\end{remark}

\begin{proof}[Proof of Theorem~\ref{thm:Zp}]
We suppose $p\ge 5$ and non-Wieferich. It follows from Lemma~\ref{lem:inertZp}
that $2$ is inert in $F=\Q_{n,p}$. Write $\fq=2\calO_F$.
By Lemma~\ref{lem:ZpSunit}
all solutions $(\lambda,\mu)$ to the $S$-unit equation \eqref{eqn:Sunit}
satisfy \eqref{eqn:conds}.
We now apply Theorem~\ref{thm:FS}.
As elliptic curves over $\Q_{n,p}$ are modular thanks to 
Thorne's theorem, we conclude that the effective Fermat's
Last Theorem holds over $\Q_{n,p}$. 
\end{proof}

\begin{remark}
The proof of Theorem~\ref{thm:Zp}
for $p=3$ and for the Wieferich primes seems out of reach
at present. There are solutions to the unit equation 
in $\Q_{1,3}$ (as indicated in Remark~\ref{remark}), 
and therefore in $\Q_{n,3}$ for all $n$, and
these solutions violate the criterion of Theorem~\ref{thm:FS}.
For $p$ a Wieferich prime, $2$ splits in $\Q_{n,p}$
into at least $p$ prime ideals and we would 
need to consider the $S$-unit equation \eqref{eqn:Sunit}
with $S$ the set of primes above $2$. 
It appears difficult to treat the $S$-unit equation
in infinite families of number fields where $\#S \ge 2$
(c.f.\ \cite[Theorem 7]{FKS} and its proof).
\end{remark}

\section{A Generalization}
In fact, the proof of Theorem~\ref{thm:Zp}
establishes the following more general theorem.
\begin{theorem}
Let $F$ be a totally real number field 
and $p \ge 5$ be a rational prime. Suppose that the following conditions
are satisfied.
\begin{enumerate}
\item[(a)] $F$ is a $p$-extension of $\Q$
(i.e. $F/\Q$ is a Galois extension of degree $p^n$ for some $n \ge 1$).
\item[(b)] $p$ is totally ramified in $F$.
\item[(c)] $2$ is inert in $F$.
\end{enumerate}
Then the asymptotic Fermat's Last Theorem holds for $F$.
\end{theorem}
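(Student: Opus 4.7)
The plan is to observe that the proof of Theorem~\ref{thm:Zp} for the cyclotomic $\Z_p$-extension used only the three properties (a), (b), (c), and to check that each lemma in Section~3 goes through verbatim under these axiomatic hypotheses.

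First I would re-examine Lemma~\ref{lem:norm}. Its proof needed only that $F/\Q$ is Galois, that the residue field $\calO_F/\fp$ equals $\F_p$, and that $\#\Gal(F/\Q)=p^n$. Condition (a) gives the Galois property and the order of the group, while condition (b) (total ramification of $p$) forces the residue degree to be $1$, hence $\calO_F/\fp\cong\F_p$. So Lemma~\ref{lem:norm} holds for $F$ as stated.

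Next I would redo Lemma~\ref{lem:Zpunit}: given a solution $(\lambda,\mu)$ of $\lambda+\mu=1$ with $\lambda,\mu\in\calO_F^\times$, Lemma~\ref{lem:norm} combined with $\Norm_{F/\Q}(\lambda),\Norm_{F/\Q}(\mu)=\pm 1$ gives $\lambda,\mu\equiv\pm 1\pmod{\fp}$, so $\pm1\pm1\equiv 1\pmod{\fp}$, contradicting $p\ne 3$ (which is automatic since $p\ge 5$). Then I would redo Lemma~\ref{lem:ZpSunit} word for word: the inertness of $2$ from condition (c) gives $\fq^2=4\calO_F$, so $n_\lambda\ge 2$ forces $\mu\equiv 1\pmod{4}$, hence $\Norm_{F/\Q}(\mu)\equiv 1\pmod 4$; but this norm is $\pm 1$, giving $\Norm_{F/\Q}(\mu)=1$; then Lemma~\ref{lem:norm} yields $\fp\mid 1-\mu=\lambda$, contradicting that $\lambda$ is an $S$-unit with $S=\{\fq\}$. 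The case $n_\lambda\le -2$ reduces to the previous one by the symmetry $(\lambda,\mu)\mapsto(1/\lambda,-\mu/\lambda)$, and the final appeal to Lemma~\ref{lem:Zpunit} completes the classification into cases (i)--(iii). In particular every $S$-unit solution satisfies the conditions \eqref{eqn:conds}.

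Finally I would apply Theorem~\ref{thm:FS}. Its Eichler--Shimura hypothesis holds because $[F:\Q]=p^n$ is odd (as $p\ge 5$), and Eichler--Shimura is known for totally real fields of odd degree. Conditions (b) and (c) provide the inert prime $\fq=2\calO_F$, and we have just verified \eqref{eqn:conds}. Thus Theorem~\ref{thm:FS} yields asymptotic FLT over $F$. There is no serious obstacle: the only thing one deliberately gives up is effectivity, because Thorne's modularity theorem is not available for a general $F$ satisfying (a)--(c), so one cannot invoke the final clause of Theorem~\ref{thm:FS}.
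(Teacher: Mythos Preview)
Your proposal is correct and is precisely the approach the paper takes: the paper simply asserts that ``the proof of Theorem~\ref{thm:Zp} establishes the following more general theorem,'' and you have spelled out, lemma by lemma, why each step in Section~3 depends only on hypotheses (a), (b), (c) together with the odd-degree Eichler--Shimura input. Your observation that effectivity is lost because Thorne's modularity theorem is specific to $\Q_{n,p}$ is also exactly right, and matches the paper's omission of ``effective'' in the statement of the generalization.
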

\begin{example}
A quick search on the L-Functions and Modular Forms Database \cite{LMFDB}
yields $153$ fields of degree $5$ satisfying
conditions of the theorem with $p=5$. 
The one with smallest discriminant is $\Q_{1,5}$. The one
with the next smallest discriminant is $F=\Q(\theta)$
where 
$\theta^5 - 110\theta^3 - 605\theta^2 - 990\theta - 451=0$.
The discriminant of $F$ is $5^8 \cdot 11^4$.
It is therefore not contained in any $\Z_p$-extension of $\Q$.
\end{example}

\end{document}